\documentclass[twoside,11pt,reqno]{amsart}

%
\usepackage{upref,amsxtra,amssymb,amsmath, amscd}
\usepackage{mathrsfs}  
\usepackage{varioref}
\usepackage{verbatim}
\usepackage{epsfig}
\usepackage{color}
\usepackage{url}
  
\usepackage{epic,eepic,eucal}

\usepackage{enumerate}

\def\Cc{            \mathcal{C}}
\def\Ee{            \mathcal{E}}
\def\Bb{            \mathcal{B}}

\def\Tt{            \mathcal{T}}

\def\Or{          \mathcal O}

\def\mS{          \mathcal S}

\def\a{         \alpha}

\newcommand{\dist}{\operatorname{dist}}
\newcommand{\NN}{{\mathbb N}}
\newcommand{\RR}{{\mathbb R}}

\newcommand{\TT}{{\mathbb T}}

\newcommand{\QQ}{{\mathbb Q}}

\newcommand{\Pp}{{\mathcal P}}

\newtheorem{theo}{\sc Theorem}[section]
\newtheorem{prop}[theo]{\sc Proposition}

\newtheorem{lemm}[theo]{\sc Lemma}
\newtheorem{coro}[theo]{\sc Corollary}

\theoremstyle{definition}

\theoremstyle{remark}

\newtheorem{rema}[theo]{\sc Remark}

\numberwithin{equation}{section}

\usepackage{hyperref}
\usepackage{graphicx} 
\usepackage{booktabs} 
\usepackage{wrapfig} 
\usepackage[labelfont=bf]{caption} 
\usepackage[top=0.9in,bottom=0.9in,left=1in,right=1in]{geometry} 

\begin{document}
\title{Simultaneous dense and nondense orbits for toral diffeomorphisms}
\author{Jimmy Tseng}
\address{School of Mathematics, University of Bristol, University Walk, Bristol, BS8 1TW UK}
\email{j.tseng@bristol.ac.uk}

\thanks{The author acknowledges the research leading to these results has received funding from the European Research Council under the European Union's Seventh Framework Programme (FP/2007-2013) / ERC Grant Agreement n. 291147.}

\begin{abstract} We show that, for pairs of hyperbolic toral automorphisms on the $2$-torus, the points with dense forward orbits under one map and nondense forward orbits under the other is a dense, uncountable set.  The pair of maps can be noncommuting.  We also show the same for pairs of $C^2$-Anosov diffeomorphisms on the $2$-torus.  (The pairs must satisfy slight constraints.)  Our main tools are the Baire Category theorem and a geometric construction that allows us to give a geometric characterization of the fractal that is the set of points with forward orbits that miss a certain open set.
\end{abstract}
\maketitle
\section{Introduction}\label{secIntro}  Given a dynamical system $f: X \rightarrow X$ on a set $X$ with a topology, we say that a point has \textit{dense forward orbit} if its forward orbit closure equals $X$ and \textit{nondense forward orbit} otherwise.  Let the set of points with dense forward orbits be called the \textit{dense set for $f$} and denoted by $D(f)$ and with nondense forward orbits be called the \textit{nondense set for $f$} and denoted by $ND(f)$.  Let $\tilde{f}:X \rightarrow X$ be another dynamical system on the same phase space.  How large is the set $D(f) \cap ND(\tilde{f})$ is a natural question to ask.  It was first asked by V.~Bergelson, M.~Einsiedler, and the author in~\cite{BET}, in which complete orbits (as well as forward orbits) were considered for commuting pairs of maps on the torus and on certain compact homogeneous spaces.  The technique in the present paper is completely different, but applies to pairs of noncommuting maps as well.  The results in this paper were proven before the results in~\cite{BET} and complement them.\footnote{After the preprint version of this paper, which was entitled {\em Simultaneous dense and nondense orbits for toral automorphisms}, appeared on arXiv, the paper~\cite{LM} appeared on arXiv showing the case for noncommuting linear maps on the $d$-torus using completely different methods from that in this paper.  Also since the preprint version of this paper appeared, R.~Shi and the author have proved in~\cite{ST} results for dense and nondense orbits on noncompact spaces.}  The technique in this paper, which involves the construction of a certain fractal with the help of periodic points, could be useful for other considerations.

\subsection{Statement of results}  Let $g: \TT^2 \rightarrow \TT^2$ be a $C^2$-Anosov diffeomorphism.  For a point $x \in \TT^2$, let $E_g^+(x)$ denote the {\em unstable manifold for $g$ through $x$} and $E_g^-(x)$, the {\em stable manifold for $g$ through $x$}.  The collection of stable manifolds $\{E^-_g(x)\}_{x \in \TT^2}$ form the \textit{stable foliation} and an element in this collection is referred to as a \textit{leaf}.  Likewise, for unstable manifolds.  (See Section~\ref{secProofthmForDenNonDenAnosov} for more details.)  Let $E_g^+:= E_g^+(\boldsymbol{0})$ and $E_g^-:= E_g^-(\boldsymbol{0})$.  

Let $T:\TT^2 \rightarrow \TT^2$ be a hyperbolic toral automorphism.  Our first main result is the following theorem and corollary.

\begin{theo}\label{thmForDenNonDen}Let $T, S:\TT^2 \rightarrow \TT^2$ be hyperbolic toral automorphisms.  If $\dim(E_T^- \cap E_S^-) = 0$, then $ND(T) \cap D(S)$ is a dense, uncountable set.
\end{theo}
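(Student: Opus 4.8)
The plan is to reduce everything to a one-dimensional Baire category argument carried out on a single line segment that already lies inside $ND(T)$.

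First I would produce, inside any prescribed ball $B(z,\varepsilon)\subseteq\TT^2$, a segment $L\subseteq ND(T)$. Since the rational points are periodic for $T$ and are dense, I pick a $T$-periodic point $p$, say $T^qp=p$, with $d(z,p)<\varepsilon/2$, and let $L$ be the length-$\varepsilon/2$ segment of the stable leaf $p+E_T^-$ centered at $p$, so that $L\subseteq B(z,\varepsilon)$. For $x\in L$ one has $T^{nq}x=p+T^{nq}(x-p)\to p$ because $x-p\in E_T^-$; hence the forward $T$-orbit of $x$ accumulates only on the finite set $\{p,Tp,\dots,T^{q-1}p\}$ and so is nondense, i.e. $L\subseteq ND(T)$. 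The direction of $L$ is $v:=E_T^-$, which by hypothesis is \emph{not} $E_S^-$.

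Next I would show that $\{x\in L:\ x\in D(S)\}$ is residual in $L$. Parametrise $L$ by a continuous map $x:[0,1]\to\TT^2$, fix a countable base $\{U_j\}_{j\ge1}$ of $\TT^2$, and set
\[
G_j=\{t\in[0,1]:\ S^nx(t)\in U_j\text{ for some }n\ge0\}=\bigcup_{n\ge0}\{t:\ S^nx(t)\in U_j\}.
\]
Each $G_j$ is open, since $t\mapsto S^nx(t)$ is continuous. The crucial point is that each $G_j$ is also \emph{dense}: given a subinterval $J\subseteq[0,1]$, the set $S^n(x(J))$ is a straight segment whose length tends to infinity and whose direction tends to $E_S^+$ --- write $v=v^++v^-$ in $E_S^+\oplus E_S^-$; since $v\notin E_S^-$ we have $v^+\neq0$, and $S^nv=\mu^nv^++(\text{contracting})^nv^-$ with $|\mu|>1$. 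Because $E_S^+$ has irrational slope, a straight segment that is long and nearly parallel to $E_S^+$ is $\delta$-dense in $\TT^2$ for any prescribed $\delta>0$, so for $n$ large $S^n(x(J))$ meets $U_j$, giving $G_j\cap J\neq\emptyset$. By the Baire category theorem $\bigcap_jG_j$ is a dense $G_\delta$ in $[0,1]$, hence uncountable; and since a point of $\TT^2$ has dense forward $S$-orbit exactly when its forward orbit meets every $U_j$, we get $\bigcap_jG_j=\{t:\ x(t)\in D(S)\}$. Therefore $L\cap D(S)$ is uncountable and dense in $L$, and $L\cap D(S)\subseteq ND(T)\cap D(S)\cap B(z,\varepsilon)$. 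Since $z$ and $\varepsilon$ were arbitrary, $ND(T)\cap D(S)$ is dense and uncountable.

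The step that requires genuine work is the geometric input used to prove density of the $G_j$: that a straight segment in $\TT^2$ of sufficiently large length whose direction is sufficiently close to the irrational line $E_S^+$ is $\delta$-dense, uniformly. This is a compactness/equidistribution statement about long expanding arcs of $S$, and it is exactly here that the transversality hypothesis $\dim(E_T^-\cap E_S^-)=0$ (guaranteeing $v^+\neq0$, so that segments actually expand under $S$) and the arithmetic of $S$ enter the argument; I expect this to be the "geometric construction" alluded to in the abstract, with the remainder of the proof being soft.
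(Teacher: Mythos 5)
Your argument is correct, but it takes a genuinely different and substantially more elementary route than the paper. You obtain segments inside $ND(T)$ essentially for free, as pieces of stable leaves through $T$-periodic (rational) points, whose forward orbits accumulate only on a finite periodic orbit; the paper instead spends Sections~\ref{secConFracforT}--\ref{secGeoFracforT} building a fractal $F_T$ of points whose forward $T$-orbits avoid a carefully chosen open box $B$ (chosen via a periodic orbit so that the $T^{-1}$-tubes never overlap properly, Proposition~\ref{propNoProperOverlaps}) and proving the key Lemma~\ref{lemmForwardFracContainsContrSpace} that every open subset of $F_T$ contains a segment parallel to $E_T^-$. Your Baire argument then runs on the parameter interval of a single segment, with density of the sets $G_j$ coming from equidistribution of long segments nearly parallel to the irrational line $E_S^+$; the paper runs Baire on the compact set $F_T$ itself, getting density of $\bigcup_n S^{-n}P(r,\ell^{-1})\cap F_T$ by expanding, under $S^{-N}$, a contracting segment of $S$ through the rational point $r$ until it crosses the $E_T^-$-segment inside $F_T$. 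The one step you flag is indeed all that needs proof, and it is true: by minimality of the linear flow in the irrational direction $E_S^+$, for each $\delta>0$ there is $L(\delta)$ such that every segment of length $L(\delta)$ in direction $E_S^+$ is $\delta/2$-dense, and any segment of length at least $L(\delta)$ whose direction is within $\delta/(2L(\delta))$ of $E_S^+$ stays within $\delta/2$ of such a segment along its initial piece, hence is $\delta$-dense; in your application the length tends to infinity and the angle to zero (here $v^+\neq 0$ uses the hypothesis), so both conditions eventually hold. What your route buys: brevity, no need for the paper's reduction to positive eigenvalues via $T^2$ and Lemma~\ref{lemmNondenseOrbitisNowhereDense}, and Corollary~\ref{coroForDenNonDen} by the same countable intersection. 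What the paper's route buys: the nondense orbits it produces avoid a fixed open box, the resulting set is a dense $G_\delta$ of the compact fractal $F_T$ (see Remark~\ref{remNonCommResultStronger}), and the union of these fractals over shrinking boxes is a winning set, whereas your points all lie on the countable union of stable leaves of periodic points, a one-dimensional, Lebesgue-null set; also, the ``geometric construction'' of the abstract refers to that box/fractal construction, not to the equidistribution of long expanding arcs.
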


\begin{coro}\label{coroForDenNonDen}Let $T$ be a hyperbolic toral automorphism of $\TT^2$ and $\{S_k\}$ be the family of all hyperbolic toral automorphisms of $\TT^2$ such that $\dim(E_T^- \cap E_{S_k}^-) = 0$.  Then $ND(T) \cap \bigcap_k D(S_k)$ is a dense, uncountable set.
\end{coro}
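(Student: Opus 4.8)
The plan is to observe that a single Cantor set produced by the geometric construction underlying Theorem~\ref{thmForDenNonDen} can be made to accommodate \emph{all} of the maps $S_k$ at once, and then to finish with one application of the Baire category theorem inside that Cantor set. It is worth first noting why a direct approach fails: each $D(S_k)$ is a dense $G_\delta$ in $\TT^2$, and the family $\{S_k\}$ is countable (a toral automorphism of $\TT^2$ is an element of $GL_2(\ZZ)$, which is countable), so $\bigcap_k D(S_k)$ is again a dense $G_\delta$ in $\TT^2$; but $ND(T)$ is meager in $\TT^2$, and a meager set can be disjoint from a dense $G_\delta$. The whole point of the construction in Theorem~\ref{thmForDenNonDen} is to run Baire category not in $\TT^2$ but inside a compact perfect subset of $ND(T)$, and that is exactly the mechanism to exploit here.

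Concretely, I would revisit the proof of Theorem~\ref{thmForDenNonDen} and record what it actually produces: for every nonempty open $U \subseteq \TT^2$ there is a Cantor set $C = C_U \subseteq ND(T) \cap U$, built from the periodic (rational) data of $T$ alone, such that for every hyperbolic toral automorphism $S$ with $\dim(E_T^- \cap E_S^-) = 0$ the set $C \cap D(S)$ is a dense $G_\delta$ subset of $C$ in the subspace topology. (That $C \cap D(S)$ is a $G_\delta$ in $C$ is automatic, since $D(S)$ is a $G_\delta$ in $\TT^2$; the content is its density in $C$, which is what the Baire step of the theorem's proof establishes. The hypothesis on $S$ enters only in proving this density.)

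Granting this, fix a nonempty open $U$ and its associated $C = C_U$. Being compact metric, $C$ is a Baire space, and being a Cantor set it is perfect, hence has no isolated points. For each $k$ the set $C \cap D(S_k)$ is a dense $G_\delta$ in $C$, so by the Baire category theorem $\bigcap_k\bigl(C \cap D(S_k)\bigr)$ is again a dense $G_\delta$ in $C$; in particular it is nonempty, and a dense $G_\delta$ in a perfect compact metric space is uncountable. Since $\bigcap_k\bigl(C \cap D(S_k)\bigr) \subseteq ND(T) \cap \bigcap_k D(S_k) \cap U$, we conclude that $ND(T) \cap \bigcap_k D(S_k)$ meets every nonempty open set, so it is dense, and it is uncountable because, for a fixed $U$, it contains the uncountable set $\bigcap_k\bigl(C_U \cap D(S_k)\bigr)$.

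The main obstacle is the decoupling claimed in the middle step — that the geometric construction of $C$ in Theorem~\ref{thmForDenNonDen} depends only on $T$ and $U$, with each $S_k$ intervening only at the Baire-category stage through the transversality condition. If the construction as written uses $S$, the remedy is to interleave: enumerate $S_1, S_2, \dots$ and perform one nested Cantor construction in which the choices at stage $k$ are additionally required to be compatible with the existence of a dense $S_k$-orbit, using at each stage the same geometric input (transversality of $E_T^-$ with $E_{S_k}^-$, together with density of expanding leaves of $S_k$) that the proof of Theorem~\ref{thmForDenNonDen} relies on; the Baire argument above then runs verbatim inside the limiting Cantor set. Either way, the conclusion of the corollary follows.
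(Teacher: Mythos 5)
Your proposal is correct and follows essentially the same route as the paper: since the fractal is built from $T$ (and a box) alone and the family $\{S_k\}\subset GL_2(\ZZ)$ is countable, one simply runs the same Baire category argument inside the compact fractal over all $k$ at once, which is exactly why the paper calls the corollary immediate from the proof of Theorem~\ref{thmForDenNonDen}. One cosmetic point: the set produced is not literally a Cantor set (every relatively open subset contains a segment parallel to $E_T^-$), but your argument only uses that it is a compact, perfect metric space contained in $ND(T)$, which it is.
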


For $C^2$-Anosov diffeomorphisms of the $2$-torus, it is well-known that the stable foliations are $C^1$ (see~\cite[Corollary~4]{HP} or~\cite[Corollary~19.1.11 and its remark]{HKIntroMD}) and the notion of transverse intersections is well-defined.  Our second main result is the following theorem and corollary and is the first time nonlinear maps are considered for simultaneous dense and nondense orbits.  

\begin{theo}\label{thmForDenNonDenAnosov}  Let $\widetilde{T}, \widetilde{S}:\TT^2 \rightarrow \TT^2$ be $C^2$-Anosov diffeomorphisms.  If the intersection points of every leaf in the stable foliation for $\widetilde{T}$ with every leaf in the stable foliation for $\widetilde{S}$ are transverse intersections, then $ND(\widetilde{T}) \cap D(\widetilde{S})$ is a dense, uncountable set.
\end{theo}

\begin{coro}\label{coroForDenNonDenAnosov}  Let $\widetilde{T}$ be a $C^2$-Anosov diffeomorphism of $\TT^2$ and $\{\widetilde{S}_k\}$ be a countably infinite (or finite) family of $C^2$-Anosov diffeomorphisms of $\TT^2$ such that the intersection points of every leaf in the stable foliation for $\widetilde{T}$ with every leaf in the stable foliation for $\widetilde{S}_k$ are transverse intersections.  Then $ND(\widetilde{T}) \cap \bigcap_k D(\widetilde{S}_k)$ is a dense, uncountable set.
\end{coro}

\noindent Note that the nondense sets for these maps are known to be Lebesgue null sets of full Hausdorff dimension~\cite{BFK, T4, Ur}.

Finally, Lemma~\ref{lemmForwardFracContainsContrSpace}, which gives a geometric characterization of the fractal that is the set of points with forward orbits that miss the open set constructed in Section~\ref{secConFracforT}, may be of independent interest.  This geometric characterization also holds in (and is very important for) the case of $C^2$-Anosov diffeomorphisms of the torus (see Lemma~\ref{lemmForwardFracContainsContrSpaceAnosov} and the proof of Theorem~\ref{thmForDenNonDenAnosov} in Section~\ref{secProofthmForDenNonDenAnosov}).

\subsection{Idea of proof and outline of paper}  The idea of the proof of Theorem~\ref{thmForDenNonDen} is as follows.  Using periodic points, we construct an open parallelogram $B$ and its iterates under $T$.  The dynamics gives us a type of geometric rigidity (Proposition~\ref{propNoProperOverlaps}) for these iterates and, in particular, preclude proper overlaps (Section~\ref{subsecPropOverlaps}).  This geometric rigidity allows us to give a geometric characterization (Lemma~\ref{lemmForwardFracContainsContrSpace}) of the fractal that encodes missing $B$ for the mapping $T$, a characterization that is strong enough to allow us to consider incidence geometry and robust enough to work under topological conjugacy.  Applying the Baire category theorem and recursively repeating this for a shrinking family of open parallelograms yields the desired result.  Sections~\ref{secConFracforT} and~\ref{secGeoFracforT} are devoted to Theorem~\ref{thmForDenNonDen} and its corollary.  The proof of Theorem~\ref{thmForDenNonDen} is in Section~\ref{secProofthmForDenNonDen}.

The proof Theorem~\ref{thmForDenNonDenAnosov}, which is found in Section~\ref{secProofthmForDenNonDenAnosov}, is a corollary of the proof of Theorem~\ref{thmForDenNonDen} and the global classification of Anosov diffeomorphisms on tori.  The key ingredient of the proof of Theorem~\ref{thmForDenNonDenAnosov} is the robustness of our geometric technique.

\subsubsection*{Acknowledgements}  I thank V.~Bergelson for suggesting the problem and discussions and the referee for useful comments on the exposition.

\section{Constructing the fractal for $T$}\label{secConFracforT}
Recall that $T: \TT^2 \rightarrow \TT^2$ is a hyperbolic toral automorphism.  Let \[E^+:= E^+_T \quad E^+(z) := E^+_T(z) \quad E^-:=E^-_T \quad E^-(z):=E^-_T(z),\] where $z \in \TT^2$.  Given a set $B \in \TT^2$, define the fractal \[F:=F_{T}(B) := \TT^2 \backslash \cup_{n=0}^\infty T^{-n}(B).\]  We refer to the elements of the collection $\{T^{-n}(B)\}_{n=0}^\infty$ as \textit{tubes} (or, more precisely, {\it $T^{-1}$-tubes for $B$}).  For two tubes $T^{-m}(B)$ and $T^{-n}(B)$ where $m<n$, we call $T^{-m}(B)$ the \textit{old tube} and $T^{-n}(B)$ the \textit{new tube}.

We will construct a small open parallelogram $B$ with one vertex at the origin $\boldsymbol{0}$ of $\TT^2$.  Our construction takes place inside a small-enough neighborhood of $\boldsymbol{0}$ so that locally we are on $\RR^2$.  Pick a small open parallelogram $B'$ with one vertex at $\boldsymbol{0}$ whose two edges $B'^-$ and $B'^+$ are small closed segments of $E^-$ and $E^+$, respectively.  Pick a rational point $y_0 \in B'$ and let $\{y_0, \cdots, y_{N-1}\} \subset \TT^2$ be the orbit of $y_0$ under $T$.

To construct the parallelogram $B$, we must correctly choose an iterate. Let $z \in \TT^2$ and $\a>0$.  Let \[P(\a, z) \subset E^-(z)\] denote the closed ball of $E^-(z)$ around $z$ of radius $\a$ and, similarly, \[Q(\a, z) \subset E^+(z)\] denote the closed ball of $E^+(z)$.  For each $y_p$, there exists a unique smallest $\a_p>0$ such that $P(\a_p, y_p)$ meets $B'^+$.  Let \[\Pp(\a) := \bigg{\{}P(\a, y_p)\bigg{\}}_{p=0}^{N-1}.\]

Let $0 \leq q<N$ be an index such that $\a_q$ is minimal in the set $\{\a_0, \cdots, \a_{N-1}\}$.  Note that since $y_0$ is chosen in the small enough open parallelogram $B'$ and $\a_q \leq \a_0$, each element of $\Pp(\a_q)$ can only intersect $B'^+$ at exactly one point, namely one of its endpoints.  Consider the following cases.

\subsection{The index $q$ is unique}  Thus $P(\a_q, y_q)$ meets $B'^+$ and is the only element of $\Pp(\a_q)$ to do so.  Call this intersection point $x$.

\subsubsection{The segment $P(\a_q, y_q)$ meets $B'$.}\label{susubsecUniMeetsInt}  Recall that $B'$ is open and thus does not contain $B'^+$.  We claim that $x \neq \boldsymbol{0}$. If not, then $P(\a_q, y_q) \subset E^-$, which implies that the periodic point $T^{\ell}(y_0) \rightarrow \boldsymbol{0}$ as $\ell \rightarrow \infty$.  Thus, the periodic point $y_0$ must be $\boldsymbol{0}$.  This contradicts the fact that $y_0$ is chosen in the open set $B'$ and shows our claim.  Let $\Ee \subset B'^+$ denote the closed segment between $\boldsymbol{0}$ and $x$.

For every $p \neq q$, there is a positive minimum distance between $P(\a_q, y_p)$ and  $B'^+$.  Consequently, there exists an open parallelogram $B \subset B'$ with $\Ee$ as an edge such that $B$ does not meet any $P(\a_q, y_p)$ (including $p = q$ since $B$ is open).  Finally, let  \[B_{\boldsymbol{0}}:= \overline{B} \cap B'^- \quad B_x := \overline{B} \cap P(\a_q, y_q).\]

\subsubsection{The segment $P(\a_q, y_q)$ does not meet $B'$.}  Picking an $\a$ slightly bigger than $\a_q$ will result in $P(\a, y_q)$ meeting $B'$, but $P(\a, y_p)$ not meeting $B'^+$ for any $p \neq q$.  Choose $B$ and denote $B_{\boldsymbol{0}}$ and $B_x$ as in Section~\ref{susubsecUniMeetsInt}.  

\subsection{The index $q$ is not unique}  Let $0\leq q_0, \cdots, q_k<N$ be all the indices such that $\a_q:= \a_{q_0} = \cdots = \a_{q_k}$.  Thus every element of $\Pp_0(\a_q):=\{P(\a_q, y_{q_i})\}_{i=0}^k$ meets $B'^+$ and are the only elements of $\Pp(\a_q)$ to do so.  

\subsubsection{At least one element of $\Pp_0(\a_q)$ meets $B'$}\label{subsubsecMultMeetsInt}  For each element of $\Pp_0(\a_q)$ that meets $B'$, there exists a unique intersection point with $B'^+$, and, since $B'$ is chosen small enough, there exists exactly one such intersection point $x$ that is nearest to $\boldsymbol{0}$.  For exactly the same reason as in Section~\ref{susubsecUniMeetsInt}, we have that $x \neq \boldsymbol{0}$.  To this $x$ corresponds a unique element of $\Pp_0(\a_q)$.  Let $\Ee \subset B'^+$ denote the closed segment between $\boldsymbol{0}$ and $x$.

Now consider each remaining element $P$ of $\Pp_0(\a_q)$.  Recall that $\a_q$ is so small that each $P$ can only intersect $B'^+$ in exactly one point.  Hence the other endpoint of $P$ does not intersect $B'^+$.

For each element of $\Pp(\a_q) \backslash \Pp_0(\a_q)$, no points intersect $B'^+$.  Consequently, there exists an open parallelogram $B \subset B'$ with $\Ee$ as an edge such that $B$ does not meet any element of $\Pp(\a_q)$.  Finally, let  \[B_{\boldsymbol{0}}:= \overline{B} \cap B'^- \quad B_x := \textrm{ the edge of } B \textrm{ parallel to } B_{\boldsymbol{0}}.\]

\subsubsection{No element of $\Pp_0(\a_q)$meet $B'$}  Picking an $\a$ slightly bigger than $\a_q$ will result in every element of $\{P(\a, y_{q_i})\}_{i=0}^k$ meeting $B'$, but $P(\a, y_p)$ not meeting $B'^+$ for any $p \notin \{q_0, \cdots, q_k\}$.  Choose $B$ and denote $B_{\boldsymbol{0}}$ and $B_x$ as in Section~\ref{subsubsecMultMeetsInt}.  This concludes the construction of the open parallelogram $B$.  Note that, in the construction of $B$, a unique element of $\Pp(\a_q)$ is chosen.

\subsection{Proper overlaps}\label{subsecPropOverlaps}  The notion of proper overlaps, to be defined towards the end of this section, is local.  However, we show below that proper overlaps cannot occur anywhere; thus a global condition on the family of tubes is obtained.

Recall the definitions of $B_{\boldsymbol{0}}$, $B_x$, $\Ee$, and $Q(\a, z)$.  Let \[\Cc:= B_{\boldsymbol{0}} \quad \Cc' := B_x \quad \Ee':= \textrm{ the edge of } B \textrm{ parallel to } \Ee\] and \[\Cc_m := T^{-m}(\Cc) \quad \Cc'_m := T^{-m}(\Cc')\quad \Ee_m := T^{-m}(\Ee) \quad \Ee'_m:= T^{-m}(\Ee').\] The edges denoted by $\Cc$ are referred to as \textit{contracting} and by $\Ee$ as \textit{expanding}.

Consider an old tube $T^{-m}(B)$ and a new tube $T^{-n}(B)$.  Let $y \in T^{-m}(B) \cap T^{-n}(B)$.  Since the intersection of the two tubes is an open set, there exists an $\a > 0$ such that $Q(\a,y)$ is contained in this intersection.  Thus, there exists unique minimal $\tilde{\a}:= \a_m, \a'_m, \a_n, \a'_n>0$ such that $Q(\tilde{\a},y)$ meets $\Cc_m$, $\Cc'_m$, $\Cc_n$, $\Cc'_n$ in the unique points $x_m$, $x'_m$, $x_n$, $x'_n$, respectively.  These intersection points will only coincide when they lie on the same edge.  (As an aside, note that the primed contracting edges cannot coincide with the non-primed contracting edges; see Lemma~\ref{lemmDisjointpieces} for a proof.)  The \textit{slice $\mS_m$ of the tube $T^{-m}(B)$ through $y$} is the open segment between $x_m$ and $x'_m$ and the \textit{closed slice $\overline{\mS}_m$} is the closed segment.  These segments are contained in $E^+(y) = E^+(x_m) = E^+(x_m')$.  The points $x_m$ and $x'_m$ are called the \textit{vertices} of the slice $\mS_m$.

\begin{lemm}\label{lemmSlicesAreTranslates} The slice $\overline{\mS}_m$ is the translation of $\Ee_m$ by $x_m$.
\end{lemm}
\begin{proof}
Lift, via the natural projection $\RR^2 \rightarrow \TT^2$, the closed tube $\Tt:=\overline{T^{-m}(B)}$ to $\RR^2$.  We obtain an infinite family of disjoint parallelograms.  Exactly one of these $\widetilde{\Tt}$ has the origin of $\RR^2$ as vertex.  Let $\widetilde{\Ee_m}$ denote the unique lift of $\Ee_m$, $\widetilde{x}_m$ denote the unique lift of $x_m$, and $\widetilde{\mS}_m$ denote the unique lift of $\overline{\mS}_m$  to $\widetilde{\Tt}$.  It follows that $\widetilde{\mS}_m$ is the translation of $\widetilde{\Ee_m}$ by $\widetilde{x}_m$ (i.e. $\widetilde{\mS}_m = \{z + \widetilde{x}_m : z \in \widetilde{\Ee_m}\}$).  Since translation on $\TT^2$ is the mapping that makes translation on $\RR^2$ commute with the natural projection map, the result follows.  \end{proof}

The proof is simplified if we assume that the eigenvalues of $T$ are positive real numbers.  The general case will follow easily from this (see the proof of Theorem~\ref{thmForDenNonDen} in Section~\ref{secBaireCatArg}).  Recall that the old and new tubes intersect at a point $y$ and that $\mS_m$ is the slice of the old tube through $y$.  Let $\mS_n$ be the slice of the new tube through $y$.   

\begin{lemm}\label{lemmSliceofOldandNewTubes} Let the eigenvalues of $T$ be positive real numbers.  For any closed slice $\overline{\mS}_m$ of a tube $T^{-m}(B)$, $T^j (\overline{\mS}_m)$ is the closed slice $\overline{\mS}_{j-m}$ of the tube $T^{j-m}(B)$ with vertices $ x_{j-m} = T^j(x_m)$ and $x_{j-m}'=T^j(x_m')$.
\end{lemm}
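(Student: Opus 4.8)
The plan is to reduce everything to two structural facts about $T$: it is a linear automorphism of $\RR^d$ that descends to a homeomorphism of $\TT^d$, and it preserves the splitting $\RR^d = E^+ \oplus E^-$. Working locally in $\RR^d$ as throughout the section, the first thing to record is that $T^j \circ T^{-m} = T^{j-m}$, so $T^j$ carries the tube $T^{-m}(B)$ homeomorphically onto $T^{j-m}(B)$; the hypothesis $j \leq m$ guarantees $j-m \leq 0$, i.e.\ that $T^{j-m}(B)$ is again one of the $T^{-1}$-tubes (or $B$ itself when $j=m$), so that the slice terminology applies to it. Secondly, since $E^+$ is $T$-invariant, $T^j(E^+ + z) = E^+ + T^j z$ for every $z$, and by Remark~\ref{remaEleEmbeddingProp} the corresponding statement holds for the expanding leaves as embedded in $\TT^d$; hence $T^j(E^+(z)) = E^+(T^j z)$.

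Combining these with the elementary fact that a homeomorphism sends connected components to connected components gives the first assertion at once: if $S_m$ is the slice of $T^{-m}(B)$ through a point $y$, that is, the component of $T^{-m}(B) \cap E^+(y)$ containing $y$, then $T^j(S_m)$ is the component of $T^j\bigl(T^{-m}(B) \cap E^+(y)\bigr) = T^{j-m}(B) \cap E^+(T^j y)$ containing $T^j y$, which is by definition the slice of $T^{j-m}(B)$ through $T^j y$; call it $S_{j-m}$. The same computation with closed tubes identifies $T^j$ of the closed slice with the closed slice of $T^{j-m}(B)$ through $T^j y$.

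For the vertices I would invoke Lemma~\ref{lemmSlicesAreTranslates}: $S_m$ is a $(d-\ell)$-parallelepiped whose two extreme vertices $x_m$ and $x_m'$ lie on the parallel contracting faces $\Cc_m = T^{-m}(\Cc)$ and $\Cc_m' = T^{-m}(\Cc')$. Since $T^j$ is linear on $\RR^d$, it sends parallelepipeds to parallelepipeds and vertices to vertices, and $T^j(\Cc_m) = T^{j-m}(\Cc)$, $T^j(\Cc_m') = T^{j-m}(\Cc')$, which are precisely the pair of parallel contracting faces of the tube $T^{j-m}(B)$ bounding $S_{j-m}$. Therefore $T^j(x_m)$ is the extreme vertex of $S_{j-m}$ on $T^{j-m}(\Cc)$ and $T^j(x_m')$ the one on $T^{j-m}(\Cc')$, which is exactly the claim $T^j(x_m) = x_{j-m}$ and $T^j(x_m') = x_{j-m}'$. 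This is where the positivity of the eigenvalues is used: $T^j$ restricted to $E^+$ is, in the eigenbasis, multiplication by the positive numbers $\lambda_{\ell+1}^j, \dots, \lambda_d^j$, so it preserves the local orthant/order structure on each expanding leaf that distinguishes the two extreme vertices of a slice; without it $T^j$ could interchange the roles of $x_m$ and $x_m'$.

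I expect the only genuine subtlety—the step to write out with care—is precisely this label-matching: checking that the extreme vertices furnished by the $Q(\a,\cdot)$ construction of the proper-overlap discussion are carried one to another in the stated order and that no orientation is reversed. Everything else is a formal consequence of $T$ being a linear toral automorphism together with Remark~\ref{remaEleEmbeddingProp} and Lemma~\ref{lemmSlicesAreTranslates}.
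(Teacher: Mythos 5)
Your argument is correct, and it reaches the same conclusion by a slightly different mechanism than the paper. The paper works at the level of the parallelepiped description from Lemma~\ref{lemmSlicesAreTranslates}: it applies $T^j$ to the vertex $x_m$ (staying in the contracting space) and to the edges of $S_m$, uses positivity of the eigenvalues to keep every edge's orientation, and then concludes from the characterization of slices that the image parallelepiped is again a slice with the stated vertices. You instead argue set-theoretically from the definition of a slice: $T^j$ is a homeomorphism carrying $T^{-m}(B)$ to $T^{j-m}(B)$ and $E^+(y)$ to $E^+(T^jy)$, hence carries the connected component through $y$ to the connected component through $T^jy$, which is the slice $S_{j-m}$ by definition; the vertex identification then follows because $T^j(\Cc_m)=T^{j-m}(\Cc)$ and $T^j(\Cc_m')=T^{j-m}(\Cc')$ and the slice meets each of these faces in a single vertex. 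This component-based route is, if anything, cleaner: the set-level statement becomes immediate, and it sidesteps the orientation bookkeeping, so positivity of the eigenvalues is not really load-bearing in your version (it is needed elsewhere in the section, e.g.\ for the proper-overlap discussion and Lemma~\ref{lemmForwardFracContainsContrSpace}). One small misattribution, not a gap: the role you assign to positivity --- preventing $T^j$ from interchanging $x_m$ and $x_m'$ --- is not actually at risk, since $x_m$ lies on an unprimed face inside the contracting leaf through $\boldsymbol{0}$ while $x_m'$ lies on a primed face inside the contracting leaf through a point of the rational orbit, and $T^j$ preserves this distinction regardless of eigenvalue signs; in the paper positivity is used rather to preserve the orientation of the slice's edges so that the image matches the normalized description of a slice in Lemma~\ref{lemmSlicesAreTranslates}.
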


\begin{proof}
 Since $T^j E^+(x_m) = E^+(T^j x_m)$ and $\overline{\mS}_m \subset E^+(x_m)$, the conclusion follows by the definition of closed slice.
\end{proof}

\begin{lemm}\label{lemmContainsSlice} Let the eigenvalues of $T$ be positive real numbers.  If both $x_n$ and $x_n'$ are in $\mS_m$, then $\overline{\mS}_n \subset \mS_m$.
\end{lemm}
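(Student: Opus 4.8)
The plan is to use Lemma~\ref{lemmSlicesAreTranslates} to reduce the containment of slices to a containment of parallelepipeds with a common vertex, and then reduce that to a coordinatewise inequality on the vertex vectors. First I would invoke Lemma~\ref{lemmSlicesAreTranslates} twice: the slice $S_m$ of the old tube is the expanding face $E^+ \cap T^{-m}(B)$ translated by $x_m$, and the closed slice $\overline{S}_n$ of the new tube is the closed expanding face translated by $x_n$. Since both slices lie in the same translate $E^+(y)$ of the expanding space and pass through the common point $y$, and since $x_m, x_n$ both lie in the contracting space $E^-$, the expanding coordinates of $x_m$ and $x_n$ agree; combined with the hypothesis that $x_n, x_n' \in S_m$, this pins down $x_n$ as a point of the parallelepiped $S_m$ and $x_n' - x_n$ as having expanding coordinates that are a coordinatewise-dominated subvector of the edge vector of $S_m$. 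Here the positivity of the eigenvalues is used (as in Lemma~\ref{lemmSliceofOldandNewTubes}): all the relevant edge vectors point along the fixed eigendirections with positive sign, so ``contained in'' for these axis-aligned parallelepipeds is exactly the coordinatewise inequality $0 \le (\text{coords of } x_n - x_m) \le (\text{coords of } x_n' - x_n) + (\text{coords of } x_n - x_m) \le (\text{edge vector of } S_m)$.

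Concretely, I would work in the local eigendirection coordinates on $E^+$. Write $e_m$ for the edge vector (from vertex $x_m$ to vertex $x_m'$) of the old slice and $e_n$ for that of the new slice, both expressed in expanding coordinates; by Lemma~\ref{lemmSlicesAreTranslates} these are the edge vectors of $E^+ \cap T^{-m}(B)$ and $E^+ \cap T^{-n}(B)$ respectively, translated to start at $x_m$ and $x_n$. The hypothesis $x_n \in S_m$ says $0 \le x_n - x_m \le e_m$ coordinatewise (using $x_m$ as the origin of the local chart on the face). The hypothesis $x_n' \in S_m$ says $0 \le x_n' - x_m \le e_m$, i.e. $0 \le (x_n - x_m) + e_n \le e_m$. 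Subtracting, $e_n \le e_m - (x_n - x_m) \le e_m$ and also $-(x_n - x_m) \le e_n$, which gives for every point $p$ of the new parallelepiped, written $p = x_n + t \cdot e_n$ with $0 \le t \le 1$ coordinatewise, that $p - x_m = (x_n - x_m) + t\cdot e_n$ lies between $0$ and $(x_n - x_m) + e_n = x_n' - x_m \le e_m$; hence $p \in \overline{S}_m \subset S_m$ once we check the boundary does not escape $S_m$. This last point—that we land in the \emph{open} slice $S_m$, not merely its closure—follows because the inclusions $x_n, x_n' \in S_m$ are into the open slice, so the inequalities above are strict on the relevant faces, and strictness propagates to the convex hull.

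The main obstacle I expect is purely bookkeeping: making sure the local eigendirection charts at the vertices $x_m$ and $x_n$ are identified correctly with the single chart on $E^+(y)$, so that the phrase ``same expanding coordinates'' is literally true and the coordinatewise inequalities can be compared. Remark~\ref{remaEleEmbeddingProp} and the proof of Lemma~\ref{lemmSlicesAreTranslates} already do the work of lifting everything to $\RR^d$ and identifying $E^+(y) - \tilde{x}_k = E^+$, so I would lift to the distinguished parallelepiped $\Tt$ containing the origin exactly as in that proof, carry out the coordinate comparison there, and then project back down. Once the lift is fixed, convexity of parallelepipeds and the positivity of the eigenvalues make the containment immediate; there is no genuine analytic difficulty, only the need to be careful that $x_n$ and $x_n'$ are the two vertices of $\overline{S}_n$ that are relevant (rather than some other pair) and that the edge vector $e_n$ is coordinatewise nonnegative in the chart where $e_m$ is, which is precisely what the positive-eigenvalue hypothesis guarantees.
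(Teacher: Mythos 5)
Your proposal is correct and takes essentially the same route as the paper's proof: both invoke Lemma~\ref{lemmSlicesAreTranslates}, write $x_n$, $x_n'$ and the points of $\overline{S}_n$ in eigendirection coordinates based at the vertex $x_m$, use positivity of the eigenvalues to ensure the edge vector from $x_n$ to $x_n'$ is coordinatewise positive, and conclude via the strict coordinatewise sandwich (the paper's $0 < \a_j(c_j'-c_j)+c_j \leq c_j' < 1$). The only quibble is your unused aside that the expanding coordinates of $x_m$ and $x_n$ agree---on lifts they generally do not, since the contracting leaf lifts to different integer translates of $E^-$---but your actual computation never relies on it.
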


\begin{proof}

The segments $\mS_m$ and $\overline{\mS}_n$ are convex. \end{proof}

\begin{lemm}\label{lemmNewTubesAreSmaller} Let the eigenvalues of $T$ be positive real numbers.  At least one vertex of $\mS_m$ is not contained in $\overline{\mS}_n$.
\end{lemm}

\begin{proof}
By Lemma~\ref{lemmSliceofOldandNewTubes}, $T^{-(n-m)} (\overline{\mS}_m)$ is a closed slice through the new tube.  By Lemma~\ref{lemmSlicesAreTranslates}, $\overline{\mS}_n$ is a translate of $T^{-(n-m)} (\overline{\mS}_m)$.  Since translation on $\TT^2$ is an isometry, $\overline{\mS}_n$ has the same length as $T^{-(n-m)} (\overline{\mS}_m)$.  But, the length of $\overline{\mS}_m$ is strictly greater than that of $T^{-(n-m)} (\overline{\mS}_m)$ because $n - m > 0$ and $\overline{\mS}_m \subset E^+(x_m)$.  The result is now immediate.
\end{proof}

We now define the notion of a proper overlap for the old and new tubes.  We say that the two tubes \textit{overlap properly at $y$} if $\mS_n \cap \{x_m, x_m'\} \neq \emptyset$.  Since the tubes are open sets, once the overlap property holds (or, respectively, does not hold) at $y$, then there is a small neighborhood of $y$ in the intersection of the two tubes for which the overlap property holds (or, respectively, does not hold). By convexity, if $\mS_n$ contains both $x_m$ and $x_m'$, then $\overline{\mS}_m \subset \mS_n$, a contradiction of Lemma~\ref{lemmNewTubesAreSmaller}.  Thus, when the tubes overlap properly, $\mS_n$ contains either $x_m$ or $x_m'$, but not both.

The following proposition provides a geometric understanding of how (open) tubes behave.

\begin{prop}\label{propNoProperOverlaps} Let the eigenvalues of $T$ be positive real numbers.  For the constructed parallelogram $B$, two distinct tubes do not properly overlap anywhere.
\end{prop}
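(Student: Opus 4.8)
To prove Proposition~\ref{propNoProperOverlaps}, the plan is to argue by contradiction. Suppose an old tube $T^{-m}(B)$ and a new tube $T^{-n}(B)$, $m<n$, overlap properly at a point $y$. The first step is a normalization reducing to the case where the old tube is $B$ itself. Applying Lemma~\ref{lemmSliceofOldandNewTubes} with $j=m$, the homeomorphism $T^m$ carries the slice $S_m$ of the old tube through $y$ to the slice of $B=T^0(B)$ through $T^m(y)$, and the slice $S_n$ of the new tube through $y$ to the slice of $T^{m-n}(B)$ through $T^m(y)$, taking $x_m,x_m'$ to the vertices of the former and $x_n,x_n'$ to the vertices of the latter. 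Since ``overlap properly'' is phrased purely in terms of these slices and their vertices, the overlap of $B$ and $T^{m-n}(B)$ at $T^m(y)$ is again proper. Relabelling, we may therefore assume the old tube is $B=T^0(B)$ and the new tube is $T^{-n}(B)$ with $n\geq 1$; write $S_0$ and $S_n$ for the slices of $B$ and $T^{-n}(B)$ through the overlap point $y$, with vertices $x_0,x_0'$ and $x_n,x_n'$. By the remark following the definition of proper overlap (which combines Lemmas~\ref{lemmContainsSlice} and~\ref{lemmNewTubesAreSmaller}), the open slice $S_n$ then contains exactly one of $x_0$, $x_0'$, and I would split into these two cases.

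In the case $x_0\in S_n$: the vertex $x_0$ lies on the contracting face $\Cc=\Cc_0\subset E^-$, while $x_n$ lies on $\Cc_n=T^{-n}(\Cc)\subset T^{-n}(E^-)=E^-$ by $T$-invariance of the contracting space, and both $x_0,x_n\in E^+(y)$. The points $x_0,y,x_n$ all lie in one fixed small neighbourhood of $\boldsymbol 0$: indeed $x_0\in\overline B$ and $y\in S_0\subset B$, and by Lemma~\ref{lemmSlicesAreTranslates} together with the $E^+$-contraction of $T^{-n}$ the slice $S_n$ through $y$ is no longer than $S_0$, so $x_n$ is close to $y$. On this neighbourhood, seen in the chart on $\RR^2$, the transverse affine lines $E^+(y)$ and $E^-$ meet in a single point, forcing $x_0=x_n$; but $x_n$ is an endpoint of the open slice $S_n$, so $x_n\notin S_n$, a contradiction.

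In the case $x_0'\in S_n$: since a slice lies in its tube, $x_0'\in T^{-n}(B)$ and hence $T^n(x_0')\in B$. On the other hand $x_0'$ lies on the parallel contracting face $\Cc'=B_x$, and by construction $B_x=\overline B\cap P(\a_m,y_\ast)$, where $y_\ast$ is the iterate of the rational point $y_0$ distinguished in the construction of $B$; in particular $x_0'\in P(\a_m,y_\ast)$. Since the contracting eigenvalue of $T$ is a positive real number less than $1$, the linear map $T^n$ carries $P(\a_m,y_\ast)$ into $P(\a_m,T^n(y_\ast))$, and $T^n(y_\ast)$ is again one of the iterates $y_0,\dots,y_{N-1}$, say $y_{\ast'}$. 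Therefore $T^n(x_0')\in B\cap P(\a_m,y_{\ast'})$, contradicting the property, built into the construction of $B$, that $B$ meets no $P(\a_m,y_k)$. As both cases are impossible, no two distinct tubes overlap properly anywhere.

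The step I expect to be the main obstacle is the second case: one must check carefully that the primed vertex $x_0'$ of the slice $S_0$ truly lands in the face $B_x$ (equivalently in $P(\a_m,y_\ast)$), not merely in the affine line $E^-(y_\ast)$ carrying that face---this is precisely where the freedom, noted in the construction, to take $B_x$ an arbitrarily small parallelepiped is used---and that $T^n(y_\ast)$ is genuinely one of the finitely many iterates appearing in the construction of $B$, so that the no-intersection property of $B$ applies verbatim. The normalization step and the single-point argument of the first case are routine, resting only on the $T$-invariance of $E^-$ and the size control for slices coming from Lemmas~\ref{lemmSlicesAreTranslates} and~\ref{lemmNewTubesAreSmaller}.
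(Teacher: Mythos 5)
Your overall structure (reduce to the old tube being $B$ itself, then split according to whether the new slice contains the unprimed or the primed vertex) is sound, and your second case is essentially the paper's argument: $x_0'\in S_n$ gives $T^n(x_0')\in B$, while $x_0'\in B_x\subset P(\a_m,y_\ast)$ and the forward $T$-iterates of the sets $P(\a_m,y_\ell)$ stay inside the family $\{P(\a_m,y_j)\}$ (positivity and contraction on $E^-$), which never meets $B$ by construction; your flagged worry there is harmless, since $\Cc'$ \emph{is} the face $B_x$ by definition.

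The genuine gap is in your first case ($x_0\in S_n$). You argue that $x_0$ and $x_n$ both lie on $E^-$ and on $E^+(y)$, and that ``in the chart the transverse affine lines $E^+(y)$ and $E^-$ meet in a single point,'' forcing $x_0=x_n$. This conflates the immersed leaf $E^-\subset\TT^2$ with a single affine line in the chart. The contracting leaf through $\boldsymbol{0}$ is dense in $\TT^2$, so inside any fixed neighbourhood of $B$ it appears as countably many parallel strands with no positive lower bound on their mutual distances; $x_n$ lies on $\Cc_n=T^{-n}(\Cc)$, a long arc of that same leaf which can re-enter the neighbourhood along a strand different from the one through $\boldsymbol{0}$ carrying $x_0\in\Cc$. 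Hence proximity of $x_0$, $y$, $x_n$ does not force $x_0=x_n$, and no contradiction follows; this is exactly the subtlety the paper handles by lifting to $\RR^2$ (Remark~\ref{remaEleEmbeddingProp}, Lemma~\ref{lemmSlicesAreTranslates}). The case is instead ruled out by the same forward-iteration device you used in the other case: $x_0\in S_n\subset T^{-n}(B)$ gives $T^n(x_0)\in B$, while $x_0\in\Cc=B_{\boldsymbol{0}}\subset E^-$ and positivity of the eigenvalues give $T^k(B_{\boldsymbol{0}})\subset B_{\boldsymbol{0}}$ for all $k\geq 0$, so $T^n(x_0)\in B_{\boldsymbol{0}}$, which is disjoint from the open box $B$ --- a contradiction. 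With that replacement your proof coincides with the paper's.
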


\begin{proof}

Let the old tube be $T^{-m}(B)$ and the new tube be $T^{-n}(B)$.  Let $y \in \TT^2$.  If $y$ is not in the intersection of the two tubes, then the two tubes do not properly overlap at $y$.

Thus we need only consider when $y$ is in the intersection of the two tubes.  Assume that the two tubes overlap properly at $y$. Let $\mS_m$ be the slice of the old tube through $y$ and $\mS_n$ be the slice of the new tube through $y$.  Since the overlap is proper, $\mS_n$ contains either $x_m$ or $x_m'$, but not both.

\textit{Case:  $x_m \in \mS_n$.}  Applying $T^m$ to the old tube will map it into the original parallelogram $B$.  Also, the point $x_m$ will return to the contracting edge of $B$, namely the set $B_{\boldsymbol{0}}$.  For all $k \geq m$, $T^k(x_m) \in B_{\boldsymbol{0}}$ because $B_{\boldsymbol{0}} \subset E^-$.  Applying $T^n$ to the new tube will map it into the original parallelogram $B$.  Since $B$ is open, it does not contain any point in any of its edges and therefore $T^n(x_m) \notin B_{\boldsymbol{0}}$, a contradiction. 

\textit{Case:  $x_m' \in \mS_n$.}  Similar to the previous case, $T^m(x_m') \in B_x$.  As before, applying $T^n$ to the new tube will map it into $B$.  Therefore $T^n(x_m') \in B$.  Let $P(\a_q, y_q)$ be the unique element of $\Pp(\a_q)$ chosen in the construction of $B$.

Now, by construction, $P(\a_q, y_q)$ contains $B_x$ and no element of $\Pp(\a_q)$ meets $B$.  Since $T$ permutes the periodic points $\{y_\ell\}_{\ell =0}^{N-1}$ and is contracting on any $P(\a_q, y_\ell)$, we have that for each $P(\a_q, y_\ell)$ there exists a $P(\a_q, y_j)$ such that $T(P(\a_q, y_\ell)) \subset P(\a_q, y_j)$.  Thus, $T^i(x_m') \notin B$ for every $i \geq m$, which is a contradiction.
 \end{proof}

\section{The geometry of the fractal for $T$}\label{secGeoFracforT}

Recall that the fractal is the set $F:=F_T :=F_T(B):=\TT^2 \backslash \bigcup_{n=0}^\infty T^{-n} B$ where $B$ is the parallelogram constructed in Section~\ref{secConFracforT}.  Recall that $B$ is an open set and therefore its tubes are also open sets.

\begin{lemm}\label{lemmDisjointpieces} Let the eigenvalues of $T$ be positive real numbers.  The sets $\Cc_M, \Cc_M'$ and $\bigcup_{n=M+1}^\infty T^{-n} B$ are pairwise disjoint.
\end{lemm}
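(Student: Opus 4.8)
The plan is to reduce each of the three pairwise-disjointness claims, by applying a suitable power of the bijection $T$, to a statement about the box $B$ together with its two contracting faces $\Cc=B_{\boldsymbol{0}}$ and $\Cc'=B_x$, all of which were built inside the fixed small neighborhood of $\boldsymbol{0}$ on which the covering projection $\RR^d\to\TT^d$ is injective. Recall from the construction that $B$ is an open parallelepiped with one vertex at $\boldsymbol{0}$ whose edges are segments of the eigendirections; that $B_{\boldsymbol{0}}=\overline{B}\cap E^-$ while $B_x\subseteq E^-+x$, where $x\in\overline{B'}\cap E^+$ is the corresponding intersection point, whose expanding coordinates are all strictly positive; and that $B$ is disjoint from every $P(\a,y_n)$ with $0\le n<N$, where $\a$ is the parameter fixed when $B$ was chosen (with the slight enlargement used in some cases of the construction understood). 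For the first claim, $\Cc_M\cap\Cc_M'=\emptyset$ is equivalent, via the bijection $T^M$, to $B_{\boldsymbol{0}}\cap B_x=\emptyset$, and $B_{\boldsymbol{0}}$ and $B_x$ are disjoint because they lie in the parallel affine subspaces $E^-$ and $E^-+x$, which are disjoint since $x\notin E^-$.

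For $\Cc_M\cap\bigcup_{n>M}T^{-n}B=\emptyset$ it suffices to show $\Cc_M\cap T^{-n}B=\emptyset$ for each fixed $n>M$; applying $T^n$ turns this into $T^{k}(B_{\boldsymbol{0}})\cap B=\emptyset$ with $k:=n-M\ge1$. Here I would use that $B_{\boldsymbol{0}}$ is a bounded set contained in $E^-$ containing $\boldsymbol{0}$ and that $T$ is linear and contracts $E^-$: then $T^{k}(B_{\boldsymbol{0}})$ is contained in $E^-$, contains $\boldsymbol{0}$, and has diameter bounded independently of $k$ (indeed the diameters tend to $0$), so, $B$ having been chosen small enough at the outset, all the sets $T^{k}(B_{\boldsymbol{0}})$ with $k\ge1$ remain inside the neighborhood of $\boldsymbol{0}$ where one may work on $\RR^d$. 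There one has $E^-\cap B=\emptyset$, since every point of the open box $B$ has all its eigendirection coordinates strictly positive and hence a nonzero expanding coordinate; therefore $T^{k}(B_{\boldsymbol{0}})\cap B=\emptyset$.

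For $\Cc_M'\cap\bigcup_{n>M}T^{-n}B=\emptyset$ I would again fix $n>M$, set $k:=n-M\ge1$, and apply $T^n$ to reduce to $T^{k}(B_x)\cap B=\emptyset$. Put $\Pi:=\bigcup_{n=0}^{N-1}P(\a,y_n)$; by the construction $B_x\subseteq P(\a,y_j)\subseteq\Pi$ for the relevant index $j$, and $B\cap\Pi=\emptyset$. Since $T$ permutes $\{y_0,\dots,y_{N-1}\}$ and, as observed in the proof of Proposition~\ref{propNoProperOverlaps}, $T(P(\a,y_\ell))\subseteq P(\a,T(y_\ell))$ for every $\ell$, we get $T(\Pi)\subseteq\Pi$ and hence $T^{k}(\Pi)\subseteq\Pi$. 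Therefore $T^{k}(B_x)\subseteq T^{k}(\Pi)\subseteq\Pi$, which is disjoint from $B$, so $T^{k}(B_x)\cap B=\emptyset$; this completes the proof.

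The step I expect to require the most care is the passage between $\RR^d$ and $\TT^d$: because $\Cc_M$, $\Cc_M'$ and the tubes $T^{-n}B$ may wrap around the torus, one cannot assert ``$E^-\cap B=\emptyset$'' globally on $\TT^d$ (the contracting leaf through $\boldsymbol{0}$ may well be dense), which is why the first move in each case is to transport everything back to $B$ by a power of $T$, and why in the $\Cc_M$ case one must verify that $T^{k}(B_{\boldsymbol{0}})$ stays small and anchored at $\boldsymbol{0}$ so that it remains in the injectivity neighborhood, whereas in the $\Cc_M'$ case one sidesteps the issue entirely by comparing with the genuinely torus-level set $\Pi$, whose disjointness from $B$ is built into the construction. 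Everything else is routine bookkeeping in the eigenbasis, using only that $T$ is a bijection contracting $E^-$. (One could alternatively deduce the two tube-disjointness statements from Proposition~\ref{propNoProperOverlaps}, since an intersection of $\Cc_M$ or $\Cc_M'$ with a tube $T^{-n}B$, $n>M$, would force a proper overlap of the tubes $T^{-M}B$ and $T^{-n}B$; but the direct argument is shorter.)
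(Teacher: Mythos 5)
Your proof is correct, but it follows a genuinely different route from the paper's. For $\Cc_M\cap\Cc_M'=\emptyset$ the paper argues on the torus at the level of stable leaves: $\Cc_M'$ lies in the leaf through a point of the rational orbit, and if that leaf coincided with the leaf of $\boldsymbol{0}$ containing $\Cc_M$, then forward iterates of the orbit point would converge to $\boldsymbol{0}$, contradicting the fact that the finite orbit of the non-integral rational $y_0$ is bounded away from $\boldsymbol{0}$. Your observation that $T^{M}$ is a bijection, so it suffices that the two parallel contracting faces $B_{\boldsymbol{0}}\subset E^-$ and $B_x\subset E^-+x$ of $\overline{B}$ are disjoint inside the injectivity chart (using that the expanding coordinates of $x$ are strictly positive), is simpler and equally valid. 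For the disjointness from the later tubes, the paper takes slices at a point of intersection, passes to a nearby point of the open intersection, and invokes Proposition~\ref{propNoProperOverlaps}; you instead push forward by $T^{n}$ and check directly that $T^{k}(B_{\boldsymbol{0}})$ stays in the local piece of $E^-$ through $\boldsymbol{0}$, which misses the open box, and that $T^{k}(B_x)$ stays in $\Pi=\bigcup_{n}P(\a,y_n)$, which was built disjoint from $B$. These two computations are precisely the two cases ($x_m\in S_n$ and $x_m'\in S_n$) inside the paper's proof of Proposition~\ref{propNoProperOverlaps}, so you are re-deriving that part of the machinery rather than citing it. What your route buys is a self-contained argument that sidesteps the proper-overlap formalism (and hence the positivity-of-eigenvalues convention under which it was set up, as well as the small maneuver of replacing the boundary point $y\in\Cc_M$ by a nearby interior point); what the paper's route buys is brevity, since Proposition~\ref{propNoProperOverlaps} is already in place. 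The one step worth stating explicitly in your write-up is that $B_x$ (equivalently $\Cc'$) is indeed contained in the single local affine piece $E^-+x$ of the leaf through $y_m$ inside the chart, i.e.\ that no other return of that leaf touches $\overline{B}$; with $\Cc'$ understood as the geometric face of the parallelepiped through the vertex opposite $\boldsymbol{0}$, as the paper intends, this is immediate and your argument is complete.
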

\begin{proof}

Let $P(\a_q, y_q)$ be the unique element of $\Pp(\a_q)$ chosen in the construction of $B$.  The stable manifolds $E^-$ and $E^-(y_q)$ are leaves in the stable foliation for $T$.  As distinct leaves of the same foliation are disjoint, the only way for them to intersect is if they coincide.  Recall that $\Cc_M \subset E^-$ and $\Cc_M' \subset E^-(y_q)$.  Thus, if we assume that $\Cc_M, \Cc_M'$ are not disjoint, then $y_q \in E^-$, but the proof of the claim in Section~\ref{susubsecUniMeetsInt} gives a contradiction.

Assume that $\Cc_M$ meets $\bigcup_{n=M+1}^\infty T^{-n} B$.  Then there exists $y \in \Cc_M \cap T^{-n} B$ for some $n > M$.  Call the tube corresponding to $M$ the old tube and $n$, the new tube.  Taking slices through the two tubes at $y$ shows that they overlap properly at a point $y'$ in the intersection of the two tubes.  (Recall that the both tubes are open, so $y'$ lies on the slice through $y$ but cannot be $y$, as $y$ lies on the edge of the old tube and not in the open intersection.) This contradicts Proposition~\ref{propNoProperOverlaps}.  The same proof suffices for $\Cc_M'$.
\end{proof}

The fractal $F$ is the set of points with forward orbits that miss $B$.  The following key lemma geometrically characterizes $F$ and is of independent interest.  First, it would be convenient to define a few terms.  We define a \textit{piece of space parallel to the stable manifold} to be a translate in $\TT^2$ of an (small) open segment contained in $E^-$.  Furthermore, we define a \textit{$T$-parallelogram} to be a parallelogram in $\TT^2$ with one pair of parallel edges parallel to $E^-$ (called the \textit{contracting edges}) and the other pair of parallel edges parallel to $E^+$ (called the \textit{expanding edges}).  An \textit{open} $T$-parallelogram is, in addition, an open set.  A \textit{slice} of a $T$-parallelogram is defined in the analogous way as the slice of a tube.  Also, we must now distinguish two types of slices for tubes:  interior and boundary. Given a tube $T^{-k}B$, the two distinct \textit{boundary slices} are $\Ee_k$ and $\Ee_k'$.  No iterate of the primed boundary slice can intersect an iterate of the unprimed boundary slice (follows by the analog of the proof of Lemma~\ref{lemmDisjointpieces}).  All other slices of the tube are \textit{interior slices.} 

We also define directions for the parallelogram $B$ and its tubes as follows.  Recall the two edges $B_{\boldsymbol{0}}$ and $\Ee$ of $B$ from Section~\ref{secConFracforT}.  Regarding these edges as vectors pointing away from $\boldsymbol{0}$ yields the \textit{positive contracting direction} which points in the direction of $B_{\boldsymbol{0}}$ and the \textit{positive expanding direction} which points in the direction of $\Ee$.  The \textit{negative} directions point in the respective opposite directions.  Since $T$ has positive eigenvalues, $T$ preserves these directions and hence these directions are defined in the same way for the tubes.  Thus, at any point $z$ in a $T^{-k}$-tube near the interior of $\Ee_k'$, we may move along $E^-(z)$ in the negative contracting direction and still remain inside the tube.  Likewise, at any point $z'$ in a $T^{-k}$-tube near the interior of $\Ee_k$, we may move along $E^-(z')$ in the positive contracting direction and still remain inside the tube.  In both cases, we may move almost as far as the length of $\Cc_k$.

Finally, we remark that, in the proof of the following lemma, it is important to note that the contracting edges of the oldest tube to meet an open set is guaranteed to become part of the fractal $F$ because there are no proper overlaps.  However, complete overlaps can occur, so even if the contracting edges of newer tubes meet this open set, they are not guaranteed to become part of the fractal, as they could lie in an older tube.  Consequently, the proof must proceed, as it does, by recursion on each iteration of a tube that meets the (relevant) open set.

\begin{lemm}\label{lemmForwardFracContainsContrSpace} Let the eigenvalues of $T$ be positive real numbers.  Any open subset of $F$ contains a piece of space parallel to the stable manifold. 
\end{lemm}
\begin{proof}
Let $U$ be an open $T$-parallelogram of small diameter containing a point $y \in F$.  In particular, we assume that diam$(U)$ is much smaller than the length of $\Cc_0$.  By ergodicity, the orbit of a point in $B$ meets $U$.  Therefore, some tube meets $U$.  Since $y \in F$, no tube can completely contain $U$.  Let $M \geq 0$ correspond to the first tube that meets $U$.  Since $U$ is not completely contained in this tube, $U$ meets the boundary of the tube.  There are two main cases.  

\bigskip\noindent
\textit{Case 1:  $U$ meets $\Cc_M \cup \Cc_M'$.}  

Let $z$ be a point in this intersection and assume that it is in $\Cc_M$.  Since $U$ is open, $U$ meets the interior of $\Cc_M$.  The intersection of $U$ and this interior is a piece of space parallel to the stable manifold. By Lemma~\ref{lemmDisjointpieces}, this interior will be in $F$.  The same proof suffices for $\Cc_M'$.  This concludes Case 1.

\bigskip\noindent
\textit{Case 2: $U$ does not meet $\Cc_M \cup \Cc_M'$.}

Since $U$ does not meet $\Cc_M \cup \Cc_M'$ and it cannot be contained inside the tube, it must meet the interior of $\Ee_M$ or the interior of $\Ee_M'$.  Since $T$ is an automorphism, $\Ee_M$ and $\Ee_M'$ cannot intersect.  Thus, if $U$ meets both the interior of $\Ee_M$ and the interior of $\Ee_M'$, then, by shrinking $U$ only along the contracting edges, we can have $U$ contain $y$ and the interior of either $\Ee_M$ or $\Ee_M'$, but not both. Since $y$ does not meet the tube, since the tubes and $U$ are open $T$-parallelograms, and since $U$ does not meet $\Cc_M \cup \Cc_M'$, the slice $\mS_U$ of $U$ through $y$ is does not meet the tube.  Moreover, since $U$ only meets the interior of $\Ee_M$ or $\Ee_M'$, the slice $\mS_U$ divides the parallelogram $U$ into two open halves:  one half $W$ intersecting the tube and the other $V$ not intersecting the tube.

\medskip\textit{Case 2A:  $U$ meets $\Ee_M$.}  First note that $V$ is in the negative contracting direction with respect to $W$.  Let $n > M$ correspond to the tube that next intersects the open $T$-parallelogram $V$.  If $V$ meets $\Cc_n \cup \Cc_n'$, then we are in Case 1 and thus finished.

Otherwise, $V$ must meet the new tube in the interior of $\Ee_n$ or the interior of $\Ee_n'$ or $V$ is completely contained in the new tube.  Now note that $\Ee_n \subset \Ee_M$ and thus $\Ee_n \cap U \subset W \cup \mS_U$ (of course, $\Ee_n \cap U$ may be empty), and, thus, $\Ee_n$ cannot meet $V$.  Since $V$ does not meet $\Cc_n \cup \Cc_n'$, then either $V$ meets the interior of $\Ee_n'$ or $V$ is completely contained in the new tube.  If the latter, we claim that the interior of $\Ee_n'$ also meets $U$.  If not, then starting at $\Ee_n$ we may move in the positive contracting direction and stay in the new tube until we meet $\Ee_n'$.  As we move along, we meet $V$ because the new tube meets $V$.  Since $U$ does not meet $\Ee_n'$ we move across $U$.  Thus the new tube completely contains $U$, a contradiction.    Therefore, we have proven our claim:  $U$ must meet the interior of $\Ee_n'$.

We note that the slice $U \cap \Ee_n'$ must not lie in the positive contracting direction with respect to $S_U$ because, otherwise, $U$ contains $y$.  Thus, either the boundary slices $\Ee_M$ and $\Ee_n'$ intersect in $U$ and therefore both contain the slice $\mS_U$ or they are distinct slices and therefore contain distinct slices of $U$ with only one of these slices allowed to possibly be $\mS_U$.  But since, as noted above, iterates of primed and unprimed boundary slices cannot intersect, the latter must hold.  Consequently, there exists an open parallelogram $\widetilde{U}$ contained in $U$ and containing $\mS_U$ and meeting either $\Ee_M$ or $\Ee_n'$, but not both.  The slice $\mS_U$ divides $\widetilde{U}$ into two halves:  one half $\widetilde{W}$ intersecting one of the tubes and the other $\widetilde{V}$ not intersecting either tube.  

Note that if there exists $M < j < n$ such that the $T^{-j}$-tube meets $U$, then the $T^{-j}$-tube must only meet $W \cup \mS_U$, which implies that $T^{-j}(B) \cap U \subset T^{-M}(B) \cap U$ and our setup is unchanged.  This concludes Case 2A.

\medskip\textit{Case 2B:  $U$ meets $\Ee_M'$.}  First note that $V$ is in the positive contracting direction with respect to $W$.  Let $n > M$ correspond to the tube that next intersects the open parallelogram $V$.  If $V$ meets $\Cc_n \cup \Cc_n'$,  then we are in Case 1 and thus finished.

Otherwise, $V$ must meet the new tube in the interior of $\Ee_n$ or the interior of $\Ee_n'$ or $V$ is completely contained in the new tube.  Now $\Ee_n \subset \Ee_M$.  Consider the case $V$ (and thus $U$) meets the interior of $\Ee_n$, then $U$ also meets the interior of $\Ee_M$, a contradiction.  Now consider the case that $V$ meets the interior of $\Ee_n'$.   Since we are allowed to move in the negative contracting direction from $\Ee_n'$ and remain in the new tube (and the new tube is much longer in this direction than $U$), the new tube meets $W \cup S_U$ and thus meets $y$, a contradiction.  Finally, consider the case that $V$ is completely contained in the new tube.  As in the previous case, we are allowed to move in the negative contracting direction from $\Ee_n'$ and remain in the new tube.  We meet $V$ and may move beyond $V$.  We may not, however, move beyond $S_U$ because, otherwise, $U$ would meet $y$, a contradiction.  Hence, the only possibility is that $U \cap \Ee_n = S_U$, which implies that $U$ meets the interior of $\Ee_M$, also a contradiction.  Consequently, for Case 2B, $V$ must meet $\Cc_n \cup \Cc_n'$, and we are in Case 1 and thus finished.  This concludes Case 2B.

\bigskip

Cases 2A and 2B now provide a recursive algorithm, which must terminate in a finite number of steps by entering Case 1.  If not, then the only way for the algorithm to continue is for Case 2A to be repeatedly used.  Let $\ell_U$ denote the length of the expanding edges of $U$.  Now at each step, the constructed $\widetilde{U}$ and $\widetilde{V}$ are open $T$-parallelograms both having expanding edges of length $\ell_U$.  Consequently, there exist an increasing sequence of times corresponding to tubes which are the first to intersect the sequence of $\widetilde{V}$ and the intersection will not contain the parts of the tubes that are the iterates of $\Cc$ or $\Cc'$.  Therefore, a slice from any tube in this sequence will have length larger than or equal to $\ell_U$.  But since this sequence is comprised of inverse iterates of $B$ under $T$, the expanding direction is contracting, and thus the length of the slices must become smaller than $\ell_U$, a contradiction.  This concludes Case 2 and proves the lemma.
\end{proof}

\section{A Baire category argument}\label{secBaireCatArg}

In this section, we prove Theorems~\ref{thmForDenNonDen} and~\ref{thmForDenNonDenAnosov} and their corollaries.  First, we require two lemmas (which, as an aside, also hold for complete orbits).  These will be widely useful in studying nondense orbits and will be of independent interest.

\begin{lemm}\label{lemmNonDenseisNowhereDense} Let $(X, {\mathcal B}, \mu)$ be a Borel probability space and $f: X \rightarrow X$ be an ergodic homeomorphism.  Then a nondense orbit is a nowhere dense orbit.
\end{lemm}
\begin{proof}
Assume not.  Let $x \in X$ have a nondense orbit that is not nowhere dense.  Thus $\overline{\Or_f^+(x)}$ contains a nonempty open set $U$.  By ergodicity, there is some point $y \in U$ with dense orbit.  Thus, \[X= \overline{\cup_{n=0}^\infty f^n(U)} \subset \overline{\Or_f^+(x)},\] a contradiction.   \end{proof}

\begin{lemm}\label{lemmNondenseOrbitisNowhereDense} Let $(X, {\mathcal B}, \mu)$ be a Borel probability space and $f: X \rightarrow X$ be an ergodic homeomorphism.  Let $n \in \NN$.  The point $x \in X$ has nondense orbit under $f \iff$ the point $x \in X$ has nondense orbit under $f^n$. \end{lemm}

\begin{proof}
The forward implication is obvious.  We prove the reverse implication. Assume it is false.  Note that
\[\overline{\Or_f^+ (x)} = \cup_{i=0}^{n-1} f^i(\overline{\Or_{f^n}^+(x)}).\]  By Lemma~\ref{lemmNonDenseisNowhereDense}, the right-hand side is a finite union of closed sets with empty interior and hence has an empty interior itself, a contradiction. 
\end{proof}

\subsection{Proofs for toral automorphisms}\label{secProofthmForDenNonDen}

Let $S: \TT^2 \rightarrow \TT^2$ be a hyperbolic toral automorphism.  Let $\Bb_S(z, \rho)$ denote the open $S$-parallelogram of diameter $\rho$ with equal length sides and with $z$ at the barycenter of the parallelogram.  For $S$, we define, as we did for $T$, the analogous notions of tubes and slices.  Since a hyperbolic toral automorphism has a local product structure, in any small enough open ball on $\TT^2$, line segments parallel to $E_S^-$ and $E_S^+$ are well-defined.

\begin{proof}[Proof of Theorem~\ref{thmForDenNonDen}]  Since the dimension of the torus is $2$, the eigenvalues of $T$ are real.  There are two cases. The first is that all the eigenvalues are positive.  
By Lemma~\ref{lemmForwardFracContainsContrSpace}, we have that an open $T$-parallelogram $U$ containing a point $y \in F_T$ also contains an open interval $I$ completely contained in $F_T$ and parallel to $E_T^-$.  Thicken this interval slightly to obtain an open $T$-parallelogram $V \subset U$.

Let $r \in \QQ^2 \cap \TT^2$ and $\ell$ be a natural number.  Consider the open parallelogram \[P:=\Bb_S(r, \ell^{-1}).\]  Since $r$ is rational, it is periodic under $S$.  Let $N$ be its period.  Let $\Cc_P$ denote the line segment through $r$ in $P$ contained in $E_S^-(r)$.  Since $S^{-N}$ fixes $r$, it fixes the leaf of the stable foliation through $r$, namely $E_S^-(r)$.  Moreover, an interval $D \subset E_S^-(r)$will be expanded by $S^{-N}$ and also contain $D$.  In particular, $\Cc_P \subset S^{-N} \Cc_P$.  Since $S^{-N}$ is ergodic, there exists $m \geq 0$ such that the $S^{-mN}$-tube for $P$ will meet $V$, and $V$ completely contains a closed slice of this tube (because slices under $S^{-1}$ contract).  Therefore, $S^{-mN}\Cc_P$ meets $V$.  Now this iterate of $\Cc_P$ is, locally in $V$, a line segment parallel to $E_S^-$, which by supposition is not parallel to $E_T^-$.  

Applying $S^{-N}$ more times will expand $\Cc_P$ (which stays on the same leaf) so that it meets all of the points of $V$ that are contained in a line segment parallel to $E^-_S$.  In particular, this iterate of $\Cc_P$ will intersect $I$ by incidence geometry at, say, a point $z'$ and this line segment is contained in $E^-_S(z')$.  And this intersection point $z'$, which belongs to $F_T$, will return to $P$ under enough iterates of $S$.

Thus, for all $r$ and $\ell$, the sets \[\bigcup_{n=0}^\infty S^{-n}\Bb_S(r, \ell^{-1}) \cap F_T\] are open dense subsets of the Baire space $F_T$.  Consequently, the Baire Category Theorem implies that the set \[A: = \bigcap_{r \in \QQ^2 \cap  \TT^2} \bigcap_{\ell = 1}^\infty \bigcup_{n=0}^\infty S^{-n} \Bb_S(r, \ell^{-1})\] restricted to $F_T$ is a dense $G_\delta$ subset of $F_T$.  

Now consider the parallelogram $B$ used in the construction of $F_T$.  Take a shrinking sequence of such parallelograms $\{B_j\}_{j=0}^\infty$ ($\boldsymbol{0}$ is a vertex in all of the parallelograms) and form the corresponding fractals $F_j := F_T(B_j)$.  Let $F := \cup_{j=0}^\infty F_j$; we note that $F$ is a subset of the set $ND(T)$ of points in the torus which have nondense forward orbits under $T$.  The set $F$ is also winning and hence dense. (The set $F$ is a superset of the winning set~\cite[Theorem~1.2]{BFK} of points whose forward orbits miss a neighborhood of $\boldsymbol{0}$).  The set $A$ is dense in all $F_j$.  

We now show that $A \cap F$ is dense.  Let $W$ be an open set of $\TT^2$.  Since $F$ is dense, $W$ contains a point $z \in F$.  But, $z \in F_j$ for some $j$.  Since $A$ restricted to $F_j$ is dense in $F_j$, there exists some $a \in A \cap F_j$ such that $a \in W$. Consequently, $A \cap F$ is a dense uncountable subset of the torus.

Finally, it is clear that $A$ is a subset of the set of points in the torus which have dense forward orbits under $S$.  This proves the theorem for $T$ with only positive eigenvalues.

For the general case of $T$ with real eigenvalues, we have that $T^2$ has positive eigenvalues.  By Lemma~\ref{lemmNondenseOrbitisNowhereDense}, the sets $ND(T)$ and $ND(T^2)$ are the same.  Consequently, $A \cap ND(T)$ is dense and uncountable.  This proves the theorem.
\end{proof}

\begin{proof}[Proof of Corollary~\ref{coroForDenNonDen}]  The proof is immediate from the proof of the theorem.\end{proof}

\begin{rema}\label{remNonCommResultStronger}
We may assert a little more (still assuming that $\dim(E_T^- \cap E_{S_n}^-) = 0$ and also adding in the additional assumption that all eigenvalues of $T$ are positive) than the conclusion of Theorem~\ref{thmForDenNonDen} or Corollary~\ref{coroForDenNonDen}.  Fix some open parallelogram $B$ as in the construction of the fractal $F_T$.  Then the set of points that miss $B$ under forward iterates of $T$ and that have dense forward orbits under all the $S_n$ is a dense $G_\delta$ subset of $F_T \subset \TT^2$.  Also, note that we may replace the role of $\boldsymbol{0}$ in the proof of the theorem with any rational point on the torus.  This is done by replacing $T$ with the power of $T$ that fixes the rational point and using Lemma~\ref{lemmNondenseOrbitisNowhereDense}.
\end{rema}

\subsection{Proofs for toral diffeomorphisms}\label{secProofthmForDenNonDenAnosov}

The global classification theorem for Anosov diffeomorphisms on tori (see~\cite[Theorem~18.6.1]{HKIntroMD} for example) is the following:

\begin{theo}\label{thmGlobClassAnoDiffeoTori}  Let $d \geq 2$.  Every Anosov diffeomorphism of $\TT^d$ is topologically conjugate to a linear hyperbolic toral automorphism.
 
\end{theo}

Let $\widetilde{T}:\TT^2 \rightarrow \TT^2$ be a $C^2$-Anosov diffeomorphism.  The theorem implies that there exists the following commutative diagram: \[\begin{CD}
\TT^2     @>\widetilde{T}>>  \TT^2\\
@VVhV        @VVhV\\
\TT^2      @>T>>  \TT^2 
\end{CD}\]  Here $T$ is a (linear) hyperbolic toral automorphism and $h$ is a homeomorphism.

Let $B^n \subset \RR^n$ denote the unit ball.  Suppose $E$ is a partition of a $d$-dimensional $C^1$-manifold $M$ into injectively immersed $n$-dimensional $C^1$-submanifolds and, for any $x \in M$, let $E(x)$ denote the submanifold containing $x$.  Recall that a \textit{foliation} (or, more precisely, \textit{$C^1$-foliation}) $E$ of $M$ is such a partition for which every $x \in M$ has an open neighborhood $U$ and a homeomorphism $\varphi: B^n \times B^{d-n} \rightarrow U$ such that\footnote{One can weaken the regularity condition to define other notions of foliation, but we have no need to do this as all the foliations we consider are $C^1$ by Theorem~\ref{thmC2AnosovC1Fol} and its remark.} \begin{itemize}
\item for each $z \in B^{d-n}$, the set $\varphi(B^n \times \{z\})$ is the connected component of $E(\varphi(\boldsymbol{0}, z)) \cap U$ containing $\varphi(\boldsymbol{0},z)$ and \item $\varphi(\cdot, z)$ is $C^1$ and depends continuously on $z$ in the $C^1$-topology.
\end{itemize}
  The sets $E_U(x):=\varphi(B^n \times \{z\})$ are called \textit{plaques} (or \textit{local leaves}) and the submanifolds $E(x)$ are called \textit{leaves}.

A classical result is the following~\cite[Corollary~4]{HP}:

\begin{theo}\label{thmC2AnosovC1Fol} Let $f$ be a $C^2$-Anosov diffeomorphism of a compact manifold M. If the stable manifolds have codimension 1, they form a $C^1$-foliation of M.
 
\end{theo}
\begin{rema}
As the roles of the stable and unstable manifolds switch when $f$ is replaced by $f^{-1}$, the theorem also applies to unstable manifolds.
\end{rema}

\noindent Note that Theorem~\ref{thmC2AnosovC1Fol} and its remark apply to $\widetilde{T}$ because both the stable and unstable manifolds are codimension 1.  Therefore, we refer to the collection of stable manifolds as the \textit{stable foliation} and the collection of unstable manifolds as the \textit{unstable foliation}.  Under topological conjugacy, the leaves of the stable foliation are mapped bijectively into the leaves of the stable foliation and the leaves of the unstable foliation are mapped bijectively into the leaves of the unstable foliation, which, for our case on $\TT^2$, is the following: \begin{align}\label{eqnAnosovLinearFoliations} E^-_{\widetilde{T}} (h^{-1} x ) = h^{-1}E^-_T(x) \quad \textrm{ and } \quad E^+_{\widetilde{T}} (h^{-1} x ) = h^{-1}E^+_T(x).
  \end{align}
  
We define a \textit{piece of space through a stable manifold} to be an intersection of a plaque of the stable foliation with an open neighborhood of $\TT^2$.  Since plaques of the stable foliation for an linear hyperbolic toral automorphism are line segments parallel to the stable manifold, this notion generalizes the notion, defined in Section~\ref{secGeoFracforT}, of a piece of space parallel to the stable manifold.  Moreover, since a tube for a $T$-parallelogram (see Section~\ref{secGeoFracforT}) is partitioned by line segments parallel to the stable manifold for $T$, its image under $h^{-1}$ is partitioned by plaques of the stable foliation for $\widetilde{T}$ (and also partitioned by plaques for the unstable foliation for $\widetilde{T}$); we refer to these images as tubes for $\widetilde{T}$.  Slices, which are plaques in the unstable foliation, are defined analogously.

Let $B$ and $F_T(B)$ be defined as in Section~\ref{secGeoFracforT} for the linear hyperbolic toral automorphism $T$.  Let \[F(B):= F_T(B) \quad \textrm{ and } \quad \widetilde{F}( h^{-1} B):= F_{\widetilde{T}}(h^{-1} B) = \TT^2 \backslash \bigcup_{n=0}^\infty \widetilde{T}^{-n} h^{-1} B\] and note that \begin{align}\label{eqnAnosovLinearFracs} \widetilde{F}( h^{-1} B) = h^{-1} F(B).
  \end{align}

\noindent We now have
\begin{lemm}\label{lemmForwardFracContainsContrSpaceAnosov} Let the eigenvalues of $T$ be positive real numbers.  Any open subset of $\widetilde{F}( h^{-1} B)$ contains a piece of space through a stable manifold for $\widetilde{T}$. 
\end{lemm}

\begin{proof}
 Apply (\ref{eqnAnosovLinearFoliations}, \ref{eqnAnosovLinearFracs}) to Lemma~\ref{lemmForwardFracContainsContrSpace}.
\end{proof}

Let $\widetilde{S}:\TT^2 \rightarrow \TT^2$ be a $C^2$-Anosov diffeomorphism.  Theorem~\ref{thmGlobClassAnoDiffeoTori} also implies that there exists the following commutative diagram: \[\begin{CD}
\TT^2     @>\widetilde{S}>>  \TT^2\\
@VVgV        @VVgV\\
\TT^2      @>S>>  \TT^2 
\end{CD}\]  Here $S$ is a (linear) hyperbolic toral automorphism and $g$ is a homeomorphism.   Let $\Bb_S(z, \rho)$ be the open $S$-parallelogram defined in Section~\ref{secProofthmForDenNonDen}.

Recall that, for a $C^1$-manifold $M$, two $C^1$-submanifolds $N_1$ and $N_2$ of complementary dimensions \textit{intersect transversely} at a point $x \in M$ if $T_xN_1 \oplus T_xN_2 = T_x M$.  For a transverse intersection, we have adapted coordinates (see~\cite[Lemma~A.3.17]{HKIntroMD} for example):
\begin{lemm}\label{lemmAdaptedCoord}  There exists a neighborhood $U \subset M$ of $x$ and coordinates $(x_1, \cdots, x_d)$ on $U$ such that in these coordinates \[N_1 \cap U = \{(x_1, \cdots, x_d): x_{n_+1} = \cdots = x_d =0\}\] and \[N_2 \cap U = \{(x_1, \cdots, x_d): x_{1} = \cdots = x_n =0\}.\]
  
\end{lemm}
\noindent A plaque of the stable foliation of $\widetilde{T}$ and a plaque of the stable foliation of $\widetilde{S}$ are both one-dimensional and, hence, should they intersect, then, by Theorem~\ref{thmC2AnosovC1Fol}, they intersect transversely at a point in $\TT^2$ if and only if their tangent lines at that point do not coincide.

\begin{proof}[Proof of Theorem~\ref{thmForDenNonDenAnosov}]  Using Lemma~\ref{lemmNondenseOrbitisNowhereDense}, we may assume, without loss of generality, that $T$ has positive eigenvalues.  By Lemma~\ref{lemmForwardFracContainsContrSpaceAnosov}, we have that an open $\widetilde{T}$-tube $U$ containing a point $y \in \widetilde{F}(h^{-1}B)$ also contains a piece of space $I$ through a stable manifold for $\widetilde{T}$.  The set $I$ is a plaque and, using Theorem~\ref{thmC2AnosovC1Fol}, is a path-connected piece of a $C^1$-curve.  Let $\widetilde{I} \subset I$ be a path-connected subset containing both of its endpoints.  Since the leaves of the stable foliation partition $\TT^2$, we may thicken this closed piece of plaque $\widetilde{I}$ into a closed $\widetilde{T}$-tube $W \subset U$. Let $V$ be the interior of $W$.  It is an open $\widetilde{T}$-tube.  Note that $V \cap \widetilde{I}$ is a plaque and a subset of $I$.

Let $r \in \QQ^2 \cap \TT^2$ and $\ell\geq 4$ be a natural number.  Consider the open $\widetilde{S}$-tube \[P:=g^{-1}\Bb_S(r, \ell^{-1}),\] which is partitioned by plaques of the stable foliation for $\widetilde{S}$.  In particular, it contains the plaque \[\Cc_P:= E^-_{\widetilde{S}}(g^{-1} r) \cap P.\]  Since $r$ is rational, $g^{-1} r$ is periodic under $\widetilde{S}$.  Let $N$ be its period.  Since $\widetilde{S}^{-N}$ fixes $g^{-1}r$, it fixes $E_{\widetilde{S}}^-(g^{-1}r)$ and, moreover, $\Cc_P \subset \widetilde{S}^{-N} \Cc_P$, as the inverse is expanding on the leaves of the stable foliation.  Since $\widetilde{S}^{-N}$ is topologically mixing (\cite[Proposition~18.6.5]{HKIntroMD}), there exists $m \geq 0$ such that the $\widetilde{S}^{-mN}$-tube for $P$ will meet $V$, and $V$ completely contains a closed slice of this tube (because slices under $\widetilde{S}^{-1}$ contract).  Therefore, $\widetilde{S}^{-mN}\Cc_P$ meets $V$ and $J:=\widetilde{S}^{-mN}\Cc_P \cap V$ is (part of) a plaque of stable foliation for $\widetilde{S}$.  

We claim that the leaf $E^-_{\widetilde{S}}(g^{-1} r)$ intersects $\widetilde{I}$.  Assume the claim is false.  We know that the plaque in $U$ of the leaf $E^-_{\widetilde{S}}(g^{-1} r)$ through the closed tube $W$ is a path-connected piece of a $C^1$-curve containing both of its endpoints, which we denote by $\widetilde{J}$ and which is nonempty (because it contains $J$).  Since $W$ is a closed $\widetilde{T}$-tube, it is partitioned by pieces of plaque from the stable foliation for $\widetilde{T}$ and each piece of plaque $I_\alpha$ is a path-connected piece of a $C^1$-curve containing both of its endpoints.  By assumption, $\widetilde{I}$ does not meet $\widetilde{J}$.  Therefore, we have two nonempty equivalence classes on the collection of pieces of plaques $\{I_\alpha\}$ that forms the partition for $W$ :  those that meet $\widetilde{J}$ and those that do not.  The same is true for a slightly smaller closed $\widetilde{T}$-tube $\widetilde{W} \subset W$.

By the continuity of the foliation on the compact set $W$, we can pick a plaque from one equivalence class and another plaque from the other equivalence class that are arbitrarily close (for, otherwise, the two classes form a separation, contradicting the fact that $W$ is connected).  For each piece $I_\alpha$ that does not meet $\widetilde{J}$, find a point $x_\alpha \in I_\alpha$ closest to $\widetilde{J}$.  Such a point exists because $I_\alpha$ and $\widetilde{J}$ are both compact.  We observe that there exists a sequence of these points $\{x_j\} \subset \{x_\alpha\}$ such that $\dist(x_j, \widetilde{J}) \rightarrow 0$ as $j \rightarrow \infty$.  If this were false, then every point in every $I_\alpha$ not meeting $\widetilde{J}$ would be a positive distance $\ell >0$ from $\widetilde{J}$.  Pick a plaque $I_\beta$ meeting $\widetilde{J}$, which is arbitrarily close to a plaque $I_\gamma$ not meeting $\widetilde{J}$.  This includes an intersection point of $I_\beta$ and $\widetilde{J}$, which is a contradiction.  Note that our observation also holds for $\widetilde{W}$.

Consider such a sequence of ${x_j}$ on $\widetilde{W}$ for which $\dist(x_j, \widetilde{J}) \rightarrow 0$ as $j \rightarrow \infty$.  As $\widetilde{W} $ is compact, a subsequence converges and, thus, there exists a limit point $x_\infty$ for which $\dist(x_\infty, \widetilde{J})=0$ and $x_\infty \in \widetilde{W} \subset V$.  The point $x_\infty$ must lie on some $I_\kappa$ which meets $V$ and, since $\widetilde{J}$ is compact, also on $\widetilde{J}$.  By assumption, the intersection of $I_\kappa$ with  $\widetilde{J}$ is transverse.  Using Lemma~\ref{lemmAdaptedCoord}, there exists a small open $\widetilde{T}$-tube $\widetilde{V} \subset W$ containing $x_\infty$ for which \[\widetilde{J} = \{(x_1, x_2) : x_1 =0\} \quad \textrm{ and } \quad I_\kappa= \{(x_1, x_2) : x_2 =0\}.\]  Any plaque $I_\alpha \cap \widetilde{V}$ (but the plaque $I_\kappa \cap\widetilde{V}$) from the stable foliation for $\widetilde{T}$ does not meet $I_\kappa$ and hence has nonzero $x_2$-coordinate, but remains a $C^1$-curve.  Since $\overline{\widetilde{V}}$ is compact and the stable foliation for $\widetilde{T}$ is $C^1$, the derivative of each $I_\alpha$ over all points in $\widetilde{V}$ is close to each other and, in particular, in adapted coordinates, this implies that the derivative of $I_\alpha$ at each point in $\widetilde{V}$ is close to the derivative of $I_\kappa$ at $x_\infty$, which is zero.  Consequently, the tangent line at $x_\infty$ of $I_\kappa$ is horizontal and the tangent lines of any point in $\widetilde{V}$ through its $I_\alpha$ has absolute value of their slope bounded by some small $1/4>\delta>0$ (perhaps after choosing $\widetilde{V}$ to be smaller). 

Let $B_R$ denote a closed $\|\cdot\|_\infty$-ball $B_R$ of radius $R$ around $x_\infty$ (which is the origin in the adapted coordinates).
In the adapted coordinates, pick a $B_R$ contained in $\widetilde{V}$. By construction, there exists a plaque $I_\omega \cap V$ of the stable foliation for $\widetilde{T}$ arbitrarily close to $I_\kappa$ such that $I_\omega \cap V \cap \widetilde{J} = \emptyset$.  Pick a point $y$ of $I_\omega \cap V$ within $B_{R/4}$.  Now $y$ lies in a proper quadrant of $B_R$ because it cannot meet $\widetilde{J}$, which in adapted coordinates is the vertical axis, and it cannot meet $I_\kappa$ because $y$ does not lie on the plaque $I_\kappa$.  For exactly the same reason, every point in $I_\omega \cap V \cap B_R$ lies in the same proper quadrant.  

Now $I_\omega \backslash \widetilde{V}$ is two connected components.  Picking a point on each component and considering the arc of $I_\omega$ between that point and $y$ implies that the plaque $I_\omega$ must intersect the boundary square of $B_R$ in at least 2 points.  We assert that one of these intersection points must lie on the line segment parallel to $I_\kappa$ in the adapted coordinates.  Assume that the assertion were false.  Then there are at least two intersection points with the line segment of the boundary of $B_R$ perpendicular to $I_\kappa$ in the adapted coordinates.  The mean value theorem implies that $I_\omega$ has a vertical tangent line at a point in $\widetilde{V}$, namely the absolute value of the slope is unbounded, which is a contradiction that proves our assertion.

Let $z$ be the intersection point with the line segment parallel to $I_\kappa$.  Then the absolute value of the slope of the line through $y$ and $z$ is greater than or equal to $\frac{3R/4}{R}$.  Since both $y$ and $z$ lie on $I_\omega$, the mean value theorem implies that there exists a point $p \in I_\omega \cap \widetilde{V}$ whose tangent line has absolute value of slope greater than or equal to $3/4$, a contradiction.  This shows that the leaf $E^-_{\widetilde{S}}(g^{-1} r)$ intersects $\widetilde{I}$ as claimed.  Call this intersection point $z'$.    And, this intersection point $z'$, which belongs to $\widetilde{F}(h^{-1} B)$, will return to $P$ under enough iterates of $\widetilde{S}$.  

The remainder of the proof is the same as that of Proof of Theorem~\ref{thmForDenNonDen}.  We note that the set of points that miss a family of shrinking neighborhoods is uncountable and dense because its image under the conjugacy is winning (as this image is the analogous set for a linear hyperbolic toral automorphism) and homeomorphisms preserve cardinality and density.  (For certain Anosov diffeomorphisms, it is also known that their nondense sets are winning~\cite{Ts15a}, but this fact is not necessary for our proof.)  This proves the theorem. \end{proof}

\begin{proof}[Proof of Corollary~\ref{coroForDenNonDenAnosov}]  The proof is immediate from the proof of the theorem.\end{proof}

\end{document}